\newcommand{\shrinkmargins}[1]{
  \addtolength{\textheight}{#1\topmargin}
  \addtolength{\textheight}{#1\topmargin}
  \addtolength{\textwidth}{#1\oddsidemargin}
  \addtolength{\textwidth}{#1\evensidemargin}
  \addtolength{\topmargin}{-#1\topmargin}
  \addtolength{\oddsidemargin}{-#1\oddsidemargin}
 \addtolength{\evensidemargin}{-#1\evensidemargin}
  }
\theoremstyle{plain}
\newtheorem{theorem}{Theorem}[section]
\newtheorem{corollary}[theorem]{Corollary}
\newtheorem{lemma}[theorem]{Lemma}
\newtheorem{proposition}[theorem]{Proposition}
\newtheorem*{teo}{Theorem}
\newtheorem{definition}[theorem]{Definition}
\theoremstyle{remark}
\newtheorem{remark}[theorem]{Remark}
\theoremstyle{definition}
\theoremstyle{fact}
\theoremstyle{claim}
\def \Z { \mathbb{Z}}
\def \Q { \mathbb{Q}}
\def \R { \mathbb{R}}
\begin{document}

\thispagestyle{empty}
\setcounter{tocdepth}{7}

\title{On the Arithmetic determination of the trace.}
\author{Guillermo Mantilla-Soler}

\date{}

\maketitle

\begin{abstract}
Let $K$ be a number field, which is tame and non totally real. In this article we give a numerical criterion, depending only on the ramification behavior of ramified primes in $K$,  to decide whether or not the integral trace of $K$ is isometric to the integral trace of another number field $L$. As a byproduct of our proofs here, and in contrast with our previous results for cubic fields of positive discriminant, we show that for cubic fields of negative discriminant  isometry between integral traces is equivalent to equality of discriminants.\\
\end{abstract}

\section{ Introduction}
 
One of the most basic arithmetic invariants of a number field $K$ is its discriminant. Even though the discriminant is not a complete invariant, at least for degree bigger than $2$, the problem of classifying number fields by means of their discriminants is central in algebraic number theory (\cite{ev}, \cite{cohen} and \cite{Sch}). Since the discriminant of a number field $K$ is the determinant associated to an integral bilinear pairing, the trace pairing on the maximal order, it is particularly interesting to ask when for two number fields such pairings are equivalent. In \cite{Manti} we studied a closely related question for cubic fields, and obtained a complete answer  whenever the fields are totally real and of fundamental discriminant. In \cite{Manti1} we found that for tame number fields with ramification at exactly one odd prime, and  at infinity, the equivalence between the trace pairings is determined by the discriminant and the signature of the fields. In this paper, we show that for number fields ramified at infinity, with only tame ramification, the integral trace pairing is totally determined by the discriminant, the signature, and a finite set of positive integers that depend only on the factorization of ramified primes. In particular, for non-totally real number fields we see that in the absence of wild ramification, the integral trace form is determined by the arithmetic of the number field at ramified primes.  

\subsection{The results}
 Let $K$ be a number field with maximal order denoted by $O_{K}$. We write ${\rm t}_{K}:O_{K} \times O_{K} \to \Z$ for the associated integral bilinear pairing induced by the trace form ${\rm Tr}_{K /\Q}: O_{K} \to \Z$. The {\it integral trace form} $q_{K}$ is the integral quadratic form associated to the pairing $ {\rm t}_{K}$. We say that two number fields $K$ and $L$ have isometric integral trace forms whenever there is an equivalence of integral quadratic forms between the forms $q_{K}$  and $q_{L}$.
Two immediate necessary conditions for the existence of such an isometry  are that the two number fields have the same signatures and discriminants. We add a numerical condition, in terms of ramified primes, that guaranties that such an isometry exists. For every prime $p$ and number field $K$ we define  a positive integer $\alpha_{p}^{K}$ which we call the  {\it first ramification factor} (see Definition \ref{RamInvar} for details). %with the property that $\alpha_{p}^{K}\neq 1$ if and only if $p$ ramifies in $K$. 

\begin{teo}[cf. Theorem \ref{isometrytrace}]
Let $K,L$ be tamely ramified number fields of the same degree over $\Q$, and suppose that $K$ is non-totally real. The integral trace forms of $K$ and $L$  are isometric if and only if the following conditions hold:

\begin{itemize}

\item[i)] $\mathrm{disc}(K)=\mathrm{disc}(L)$,

\item[ii)] $K$ and $L$ have the same number of complex embeddings,

\item[iii)] For every finite prime $p \neq 2$ that ramifies in $K$ we have that \[ \left( \frac{\alpha_{p}^{K}}{p} \right)=\left( \frac{\alpha_{p}^{L}}{p} \right). \]

\end{itemize}
\end{teo}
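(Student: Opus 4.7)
The plan is a local-global analysis. Both directions rest on understanding $q_{K} \otimes \Z_{p}$ at each finite prime $p$ and the signature at infinity, and on passing from local isometries to a global isometry via Eichler's theorem for indefinite integral lattices.

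For necessity, (i) and (ii) are obviously preserved by any $\Z$-isometry. For (iii), I would show that the class of $\alpha_{p}^{K}$ in $\mathbb{F}_{p}^{\times}/(\mathbb{F}_{p}^{\times})^{2}$ is an invariant of the $\Z_{p}$-isometry class of $q_{K}$. Under tame ramification, the $p$-adic trace form admits a Jordan decomposition with a unimodular block and a block of scale $p$, and up to a factor coming from $\mathrm{disc}(K)$ the discriminant of the scaled block modulo squares is $\alpha_{p}^{K}$. Hence it is detected by the $\Z_{p}$-isometry class, forcing condition (iii).

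For sufficiency I would verify local isometries at every place and then invoke Eichler. At infinity, equal degree and equal number of complex places produce equal signatures, hence isometry over $\R$. At an odd unramified prime, the local trace form is unimodular and classified by rank and discriminant modulo squares, both controlled by (i). At an odd ramified prime, the two-block Jordan decomposition above together with (i) and (iii) pins down the $\Z_{p}$-isometry class. At $p=2$, condition (iii) is absent, so one must argue that the $\Z_{2}$-isometry class is forced by (i) and (ii) alone. The natural tool is the product formula for Hasse--Witt invariants: since the sum of local Hasse invariants vanishes globally, knowledge at all odd primes (from (i) and (iii)) and at infinity (from (ii)) determines the Hasse invariant at $2$. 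The non-totally-real hypothesis, via its contribution to the Hasse invariant at the archimedean place, ensures that this information propagates to give an actual $\Z_{2}$-isometry of the trace forms together with matching of the residual parity/type invariant.

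With local isometries at every finite place and $\R$-isometry in hand, I would conclude using Eichler's theorem: since $K$ is not totally real, $q_{K}$ is indefinite, and for indefinite integral lattices of rank at least three the isometry class coincides with the spinor genus; under the mild conditions satisfied by tame trace forms the spinor genus coincides with the genus. The main obstacle I anticipate is the $2$-adic analysis just sketched, since the classification of $\Z_{2}$-unimodular and tame-ramified lattices is genuinely subtle and one must verify that the Hasse reciprocity argument yields a true $\Z_{2}$-isometry rather than merely matching one numerical invariant. A secondary issue is checking that spinor genus equals genus for the specific lattices arising here, which may require an explicit computation of the spinor norms at the ramified primes.
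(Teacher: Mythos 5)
Your overall strategy is the same as the paper's: match the local lattices at every place, conclude equality of genera, pass to spinor genera, and finish with Eichler's theorem on indefinite forms of rank at least three. The necessity direction and the local analysis at infinity and at odd primes are fine and coincide with what the paper does (the unimodular Jordan constituent of $q_K\otimes \Z_p$ has discriminant $\alpha_p^K$ modulo squares by the structure theorem for tame trace forms, and for odd $p$ a two-block Jordan splitting is determined by the ranks and discriminants of its constituents; this is the content of the paper's Lemma on forms $q(\alpha_i,\beta_i)$).

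The genuine gap is exactly where you suspected it: the prime $2$. Your Hasse--Witt reciprocity argument correctly yields $q_K\otimes\Q_2\cong q_L\otimes\Q_2$, but a $\Q_2$-isometry together with equal discriminants does not in general force a $\Z_2$-isometry --- for instance $\langle 1,7\rangle$ and the hyperbolic plane $2xy$ are isometric over $\Q_2$ with the same discriminant class, yet are inequivalent over $\Z_2$ because one is odd and the other even. So ``matching the Hasse invariant at $2$'' cannot by itself ``propagate'' to an integral isometry; you need a structural input about the $2$-adic trace lattice. The paper supplies it: by the explicit Jordan decomposition of the tame trace form, the unimodular constituents $\mathfrak{a}_2^K$ and $\mathfrak{a}_2^L$ are \emph{odd} (diagonal) unimodular $\Z_2$-lattices lying on isometric quadratic spaces (via Conner--Yui and Witt cancellation), and odd unimodular $\Z_2$-lattices on isometric spaces are isometric by O'Meara 93:16; this is the Erez--Morales--Perlis result quoted as Proposition~\ref{2genustame}. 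Without some such argument your sufficiency proof does not close. A second, smaller omission is the step ``genus $=$ spinor genus'': this is not a formality and is not obtained by a routine spinor-norm computation in the paper either --- it is imported wholesale from the author's earlier work (\cite[Theorem 2.12]{Manti2}). You correctly identify both issues as the hard points, but the proposal as written resolves neither.
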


Theorem \ref{isometrytrace} can also be stated in terms of ramification and inertia degrees at ramified primes (see Proposition \ref{usefulcalc}).\\

When dealing with small dimensional number fields the invariants $\alpha_{p}^{K}$ can be easily determined. An interesting consequence of this is the following result on cubic fields.

\begin{teo}[cf. Theorem \ref{isometrycubic}]
Let $K,L$ be cubic number fields, and suppose $\mathrm{disc}(K)<0$. Then, $K$ and $L$ have isometric integral trace forms if and only if they have the same discriminant.
\end{teo}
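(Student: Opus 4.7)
The plan is to deduce the theorem from Theorem \ref{isometrytrace}, verifying each of its three hypotheses directly from $\mathrm{disc}(K)=\mathrm{disc}(L)$ in the cubic setting. The forward implication is immediate because $\det(q_K)=\mathrm{disc}(K)$, so isometric trace forms share a discriminant.

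For the converse, assume $\mathrm{disc}(K)=\mathrm{disc}(L)<0$. A cubic field has negative discriminant precisely when its signature is $(1,1)$, so both $K$ and $L$ are non-totally real with exactly one pair of complex embeddings; this establishes condition (ii) of Theorem \ref{isometrytrace}, while (i) is the hypothesis. For condition (iii), the key observation is that whenever an odd prime $p\neq 3$ ramifies in a cubic field, the factorization of $p$ is completely determined by $v_p(\mathrm{disc}(K))$: the value $v_p=1$ forces $(p)=\mathfrak{p}^2\mathfrak{q}$, while $v_p=2$ forces $(p)=\mathfrak{p}^3$. Invoking Proposition \ref{usefulcalc}, which expresses $\alpha_p^K$ in terms of the $(e_i,f_i)$ data at $p$, the Legendre symbol $\left(\frac{\alpha_p^K}{p}\right)$ depends only on $v_p(\mathrm{disc}(K))$, so the equality of discriminants forces condition (iii) automatically.

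The main obstacle is the tameness requirement of Theorem \ref{isometrytrace}: a cubic field is wildly ramified precisely when $3$ is totally ramified in it, in which case $v_3(\mathrm{disc}(K))\in\{3,4,5\}$. I would handle this residual case by a direct local comparison at $3$: for each admissible value of $v_3(\mathrm{disc}(K))$, one enumerates the (finitely many) totally ramified cubic extensions of $\Q_3$ with that discriminant and verifies that they yield isometric trace pairings on their maximal orders. Combined with the tame analysis at the remaining primes and the matching signature at infinity, these local equivalences can then be upgraded to a global isometry using that $q_K$ and $q_L$ are indefinite rank-$3$ integer quadratic forms, for which genus-theoretic obstructions to a single class per genus are minimal. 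Carrying out the $3$-adic verification in the wild case is where I expect the real work to lie.
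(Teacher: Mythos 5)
Your overall strategy is close to the paper's: the tame primes are handled by the observation that $v_p(\mathrm{disc})$ determines the splitting type of a tamely ramified prime in a cubic field, and the wild place is treated by enumerating the finitely many totally ramified cubic extensions of $\Q_3$ of each admissible discriminant and comparing their local traces directly (this is exactly the paper's Lemma \ref{localcubicos}), after which indefiniteness and Eichler's theorem give the global isometry. However, there is a genuine gap: your claim that ``a cubic field is wildly ramified precisely when $3$ is totally ramified in it'' is false. The prime $2$ is wildly ramified in a cubic field whenever $(2)=\mathfrak{p}^2\mathfrak{q}$, since then $2$ divides the ramification index $e=2$. So your reduction to Theorem \ref{isometrytrace} away from $3$ silently assumes tameness at $2$, which need not hold, and this case requires real work: one must first rule out the mixed situation where $2$ is wildly ramified in $K$ (type $\mathfrak{p}^2\mathfrak{q}$) but tamely and totally ramified in $L$ (type $\mathfrak{p}^3$) --- the paper does this by computing that the tame type would force $\mathrm{disc}\equiv -12 \bmod (\Z_{2}^{*})^2$, which is not the discriminant of any ramified quadratic extension of $\Q_2$ --- and then show that in the common wild case $q\otimes\Z_2\cong\langle 1\rangle\oplus T_F$ with $F/\Q_2$ quadratic and determined by the discriminant.

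A secondary soft spot: once you leave the tame setting you can no longer invoke Theorem \ref{isometrytrace}, so you need to pass from ``locally isometric everywhere'' (same genus) to a single isometry class. For indefinite forms of rank $\geq 3$ Eichler gives one class per \emph{spinor} genus, not per genus; a genus may still contain several spinor genera. The paper bridges this with \cite[Theorem 2.12]{Manti2}, which says trace forms in the same genus lie in the same spinor genus. Your phrase about genus-theoretic obstructions being ``minimal'' needs to be replaced by an appeal to that result (or an explicit spinor-norm computation).
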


\begin{remark}
Notice that Theorem \ref{isometrycubic} does not make any reference to ramification type. In particular, it is valid even in the presence of wild ramification.
\end{remark}

In higher dimensional number fields, in contrast with cubic fields ramified at infinity, the discriminant is not enough to determine the isometry class of the integral trace. However, in certain cases the discriminant captures the spinor genus of the integral trace. More explicitly we have:

\begin{teo}[cf. Theorem \ref{GalTotTame}]
Let $K,L$ be two totally ramified tame Galois number fields. Suppose that both $K$ and $L$ have odd degree. Then, the forms $q_{K}$ and $q_{L}$ belong to the same spinor genus if and only if
$\mathrm{disc}(K)=\mathrm{disc}(L).$ 
\end{teo}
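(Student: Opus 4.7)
The forward direction is immediate: belonging to the same spinor genus implies belonging to the same genus, hence having the same discriminant. For the converse my plan is to show that equality of discriminants forces $q_K$ and $q_L$ first to share a genus, and then that each genus in question consists of a single spinor genus.

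I would begin by noting that any Galois extension of $\Q$ of odd degree is totally real, since complex conjugation would otherwise be an element of order two in a group of odd order. Hence $q_K$ and $q_L$ are positive definite of common rank $n=[K:\Q]$. At every unramified prime $p$ the lattice $q_K \otimes \Z_p$ is unimodular of rank $n$, and its isometry class depends only on $\mathrm{disc}(K) \bmod (\Z_p^*)^2$. At a totally ramified tame prime $p$, the Galois hypothesis forces the inertia group to be cyclic of order $n$ with $n \mid p-1$; picking a uniformizer $\pi$ of $O_{K_p}$ with $\pi^n = pu$, $u \in \Z_p^*$, and computing traces in the basis $\{1,\pi,\ldots,\pi^{n-1}\}$ (using that $\mathrm{Tr}(\pi^k)$ vanishes unless $n \mid k$) yields
\[
q_K \otimes \Z_p \;\cong\; \langle n \rangle \perp p\,H_p^{\oplus(n-1)/2},
\]
where $H_p$ denotes the hyperbolic plane over $\Z_p$. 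Because $n \in \Z_p^*$, the unit $u$ is absorbed by an integral change of basis, so this local class depends only on $n$ and $p$. Consequently $\mathrm{disc}(K) = \mathrm{disc}(L)$ forces $q_K \otimes \Z_p \cong q_L \otimes \Z_p$ at every prime, and the two forms lie in the same genus.

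To pass from genus to spinor genus I would invoke the standard idelic description: within a genus the set of spinor genera is a torsor under the finite abelian group $J_\Q/(\Q^* \theta(O_A^+(q)))$, where $\theta$ is the idelic spinor norm. Since the narrow class group of $\Q$ is trivial, this quotient vanishes as soon as $\theta(O^+(q_K \otimes \Z_p))$ contains $\Z_p^* \bmod (\Z_p^*)^2$ at every finite prime $p$, the infinite place contributing $\R_{>0}$ automatically by positive definiteness. For unramified $p$ this is classical for unimodular $\Z_p$-lattices of rank $\ge 2$. For ramified $p$, the summand $pH_p^{\oplus(n-1)/2}$ contains primitive vectors $v = \alpha e_1 + \beta f_1$ of norm $\alpha\beta p$ for arbitrary $\alpha,\beta \in \Z_p^*$; one verifies that the reflection $\tau_v$ preserves the full lattice (the only nontrivial check, inside the summand containing $v$, reduces to $a/\alpha + b/\beta \in \Z_p$ for $a,b \in \Z_p$), and products of pairs of such reflections realize every class of $\Z_p^*/(\Z_p^*)^2$ as a spinor norm.

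The main technical obstacle is the spinor-norm calculation just sketched: one must check that these reflections both belong to $O(q_K \otimes \Z_p)$ on the nose and that their pairwise products sweep out all of $\Z_p^*/(\Z_p^*)^2$ rather than a proper subgroup. The unimodular summand $\langle n \rangle$ alone contributes only the class of $n$, so the hyperbolic summand is essential; the argument uses $n \ge 3$, with $n=1$ being vacuous.
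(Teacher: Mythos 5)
Your overall architecture is sound and genuinely different from the paper's. The paper deduces this theorem in two lines from its corollary on tame Galois fields of odd degree: equal discriminants plus total tame ramification force equal degrees and equal ramification indices, so the Legendre--symbol condition of that corollary holds automatically; the corollary in turn rests on imported results (the local shape of the trace form from the author's companion paper, the $2$-adic statement of Erez--Morales--Perlis/Conner--Yui, and the genus-equals-spinor-genus theorem for tame trace forms). You instead exploit the strong hypotheses to compute everything by hand. Your Gram matrix $\langle n\rangle\perp p\,H_p^{\oplus(n-1)/2}$ in the basis $1,\pi,\dots,\pi^{n-1}$ is correct, and your spinor-norm computation at ramified primes --- reflections in $\alpha e_1+\beta f_1$ preserve the lattice and their pairwise products sweep out all of $\Z_p^*/(\Z_p^*)^2$ --- is a correct, self-contained replacement for the appeal to the spinor-genus theorem. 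Two small omissions are easily repaired: you assert ``common rank $n$'' although equal degrees is not a hypothesis (it follows because $v_p(\mathrm{disc})=n-1$ at any ramified prime, and some prime ramifies unless $K=\Q$); and you may note that $p=2$ can never be a ramified prime here, since tame totally ramified Galois forces $e\mid p-1$.

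The one genuine gap is the unramified prime $p=2$. Your claim that a unimodular $\Z_p$-lattice is determined by rank and discriminant is true for odd $p$ but false for $p=2$: odd unimodular $\Z_2$-lattices of equal rank and determinant can differ in oddity (equivalently, in the $2$-adic Hasse invariant), e.g.\ $\langle 1,1\rangle\not\cong\langle 3,3\rangle$. This matters here, because the $\Z_2$-trace form genuinely depends on the splitting type of $2$ and not only on $n$ and $\mathrm{disc}(K)$: the trace form of the unramified quadratic extension of $\Q_2$ is $\langle -1,3\rangle$, which has determinant $\equiv 5 \bmod (\Z_2^*)^2$ yet is not $\Z_2$-isometric to $\langle 1,5\rangle$. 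What you actually need is that two fields of the same degree and discriminant in which $2$ is unramified have isometric $2$-adic trace forms; this is true, but the proof requires knowing that the Hasse--Witt invariant at $2$ of the trace form is controlled by the discriminant and degree (Conner--Yui), after which \cite[93:16]{Om} upgrades the $\Q_2$-isometry of these odd unimodular lattices to a $\Z_2$-isometry. This is precisely Proposition \ref{2genustame} of the paper; you should either invoke it or supply the oddity computation for trace forms of unramified $\Q_2$-algebras. Your spinor-norm claim at $p=2$ should likewise be stated for rank $\ge 3$ rather than $\ge 2$ (which is all you need, since $n\ge 3$).
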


For number fields of fundamental discriminant\footnote{Recall that a number $d$ is called fundamental discriminant whenever it is equal to the discriminant of a quadratic field.} our results  lead us to a surprising way to determine when an isometry between the integral traces exists. It turns out that the existence of such an isometry depends on the parity of the number of factors of ramified primes in each field. 
\begin{teo}[cf. Theorem \ref{arithmeticqequiv}]
Let $K, L$ be non totally real number fields of the same signature and same fundamental discriminant, and assume further that $2$ is at worst tamely ramified in both fields. Then, the integral trace forms of $K$ and $L$ are isometric if and only if for every odd prime $p$ that ramifies in $K$ the number of primes in $O_K$ lying over $p$ has the same parity as and the number of primes in $O_{L}$ lying over $p$.
\end{teo}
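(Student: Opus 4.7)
The plan is to derive this as a corollary of Theorem \ref{isometrytrace} by translating its condition (iii) into the parity condition using the fundamental-discriminant hypothesis. Conditions (i) and (ii) of Theorem \ref{isometrytrace} are immediate from the hypotheses ``same discriminant'' and ``same signature''. For tameness: the prime $2$ is tame by assumption, and any odd $p$ with $v_p(\mathrm{disc}(K))=1$ is automatically tame, since a wildly ramified $p$ would contribute at least $(p-1)\cdot 1\geq 2$ to the discriminant valuation. So Theorem \ref{isometrytrace} applies once we control condition (iii).

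Next I would pin down the ramification at each odd $p$ dividing the common discriminant $d$. Since $d$ is fundamental, $v_p(d)=1$; combined with the tame discriminant formula $v_p(d)=\sum_i(e_i-1)f_i$, this forces $pO_K=\mathfrak{p}_1^{2}\mathfrak{p}_2\cdots\mathfrak{p}_{g_K}$ with residue degree $f(\mathfrak{p}_1)=1$, while $\mathfrak{p}_2,\ldots,\mathfrak{p}_{g_K}$ are all unramified with residue degrees summing to $n-2$. The identical structure holds in $L$ with $g_L$ primes above $p$.

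The key step is to compute what condition (iii) becomes in this configuration. Using Proposition \ref{usefulcalc} together with the standard fact that the integral trace form on an unramified $\Z_p$-factor of residue degree $f$ has discriminant that is a square modulo $p$ precisely when $f$ is odd, one finds that the square class modulo $p$ of the unit part of the local discriminant of $K$ at $p$ equals the square class of the unit part of the single ramified local factor, multiplied by $(-1)^{m_K}$, where $m_K$ is the number of unramified primes above $p$ of even residue degree. A parity count using $\sum_{i\geq 2}f_i=n-2$ shows $m_K\equiv g_K-1-n\pmod 2$, and likewise for $L$. Since $\mathrm{disc}(K)=\mathrm{disc}(L)$ already fixes this total unit part, condition (iii), which amounts to the equality of the two ramified local square classes, becomes equivalent to $g_K\equiv g_L\pmod 2$ at each odd ramified prime $p$. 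The converse direction is symmetric.

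The main obstacle is the local computation in the third step: isolating the contributions of the ramified and unramified completions to the square class of $\alpha_p^K$ modulo $p$, and expressing the unramified contribution in terms of the parity of $g_K-1-n$. Once Proposition \ref{usefulcalc} supplies the explicit $(e_i,f_i)$-formula for $\alpha_p^K$, this reduces to the elementary parity count just sketched and then plugs directly into Theorem \ref{isometrytrace}.
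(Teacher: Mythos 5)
Your proposal is correct and follows essentially the same route as the paper: reduce to Theorem \ref{isometrytrace} (equivalently Proposition \ref{usefulcalc}), use $v_p(d)=1$ plus the tame discriminant formula to force the splitting type $\mathfrak{p}_1^{2}\mathfrak{p}_2\cdots\mathfrak{p}_{g}$ with $e_1=2$, $f_1=1$, and then turn condition (iii) into the parity statement $g_p^K\equiv g_p^L \pmod 2$. The paper packages the final parity count through the invariant $h_p$ of Proposition \ref{usefulcalc} (showing $h_p^K=h_p^L=\tfrac{1-(2/p)}{2}$) rather than decomposing $\alpha_p$ by local factors as you do, but the two computations are the same, and your observation that odd primes dividing a fundamental discriminant are automatically tame is exactly the paper's subsequent remark.
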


\section{Proofs for general degrees}
We start with a general fact about a special kind of quadratic forms over the $p$-adic integers. We will denote by $\langle a_1,...,a_n\rangle$ the isometry class of a quadratic form $a_{1}x_{1}^{2}+...+a_{n}x_{n}^{2}$. \\  

\begin{lemma}\label{genusandhasse} Let $p$ be an odd prime, and let $\alpha_{1}, \alpha_{2}, \beta_{1}, \beta_{2}$ be elements in $\Z_{p}^{*}$ such that $\alpha_{1} \beta_{1}=\alpha_{2} \beta_{2} \bmod (\Z_{p}^{*})^2$. Let $0<f \leq n$ be positive integers and consider the $n$-dimensional $\Z_{p}$-quadratic forms given by 
\[ q(\alpha_{i}, \beta_{i}):=  \underbrace{\langle 1,..., 1,\alpha_{i} \rangle}_{f}  \bigoplus \langle p \rangle \otimes \underbrace{\langle 1,...,1,\beta_{i} \rangle}_{n-f}.\] 
The following are equivalent:

\begin{enumerate}

\item The forms $q(\alpha_{1}, \beta_{1})$ and $q(\alpha_{2}, \beta_{2})$ are isometric over $\Z_{p}.$

\item The forms $q(\alpha_{1}, \beta_{1})$ and $q(\alpha_{2}, \beta_{2})$ are isometric over $\Q_{p}.$

\item $(\alpha_{1},p)_{p}=(\alpha_{2}, p)_{p}$, where $(,)_{p}$ denotes the $p$-adic Hilbert symbol.

\end{enumerate} 

\end{lemma}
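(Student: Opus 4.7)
The plan is to establish the cycle $(1) \Rightarrow (2) \Rightarrow (3) \Rightarrow (1)$. The implication $(1) \Rightarrow (2)$ is immediate from scalar extension along $\Z_{p} \hookrightarrow \Q_{p}$.

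For $(2) \Rightarrow (3)$, I would invoke the classification of non-degenerate quadratic forms over $\Q_{p}$: two such forms are isometric iff they share dimension, discriminant class in $\Q_{p}^{*}/(\Q_{p}^{*})^{2}$, and Hasse--Witt invariant. The ranks both equal $n$. The discriminants satisfy
\[ d(q(\alpha_{i},\beta_{i})) \equiv p^{n-f}\,\alpha_{i}\beta_{i} \pmod{(\Q_{p}^{*})^{2}}, \]
so they agree by the standing hypothesis $\alpha_{1}\beta_{1} \equiv \alpha_{2}\beta_{2} \pmod{(\Z_{p}^{*})^{2}}$. Hence $(2)$ is equivalent to equality of Hasse invariants. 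I would then compute the Hasse invariant of each form directly from its diagonal entries $(1,\dots,1,\alpha_{i},p,\dots,p,p\beta_{i})$, using that for odd $p$: $(u,v)_{p}=1$ when $u,v\in\Z_{p}^{*}$, $(u,p)_{p}=\left(\frac{u}{p}\right)$, and $(p,p)_{p}=\left(\frac{-1}{p}\right)$. Writing $\epsilon_{i}=\left(\frac{\alpha_{i}}{p}\right)$ and $\delta_{i}=\left(\frac{\beta_{i}}{p}\right)$, a bookkeeping computation (using the identity $\binom{n-f-1}{2}+(n-f-1)=\binom{n-f}{2}$) gives
\[ s(q(\alpha_{i},\beta_{i})) = \epsilon_{i}^{\,n-f}\, \delta_{i}^{\,n-f-1}\,\left(\tfrac{-1}{p}\right)^{\binom{n-f}{2}}. \]
Since the hypothesis forces $\epsilon_{1}\delta_{1}=\epsilon_{2}\delta_{2}$, the ratio of the two Hasse invariants collapses to $(\epsilon_{1}\epsilon_{2})^{2(n-f)-1}=\epsilon_{1}\epsilon_{2}$, so equality of Hasse invariants is exactly $\epsilon_{1}=\epsilon_{2}$, i.e.\ condition $(3)$. (The edge case $f=n$ is degenerate and handled directly: both forms are $\langle 1,\dots,1,\alpha_{i}\rangle$, which have trivial Hasse invariant, and $(3)$ is exactly the condition $\alpha_{1}\equiv\alpha_{2} \bmod (\Z_{p}^{*})^{2}$.)

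For $(3) \Rightarrow (1)$, I would appeal to the Jordan decomposition theory over $\Z_{p}$ for odd $p$ (see e.g.\ O'Meara): a $\Z_{p}$-lattice decomposes uniquely (up to isometry) as an orthogonal sum of $p^{k}$-scaled unimodular blocks, and each unimodular block is classified by its rank and discriminant class in $\Z_{p}^{*}/(\Z_{p}^{*})^{2}$. The forms $q(\alpha_{i},\beta_{i})$ are already presented in Jordan form with a unimodular block of rank $f$ and discriminant $\alpha_{i}$, and a $p$-scaled block of rank $n-f$ whose unimodular part has discriminant $\beta_{i}$. Hence $(1)$ is equivalent to the pair of congruences $\alpha_{1}\equiv\alpha_{2}$ and $\beta_{1}\equiv\beta_{2}$ modulo $(\Z_{p}^{*})^{2}$. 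Condition $(3)$ yields the first congruence (since $(u,p)_{p}=\left(\frac{u}{p}\right)$ distinguishes squares from non-squares), and then the hypothesis $\alpha_{1}\beta_{1}\equiv\alpha_{2}\beta_{2}$ forces the second, closing the cycle.

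The main obstacle is the explicit Hasse-invariant computation in $(2)\Rightarrow(3)$: getting the exponents right and noticing that after using the hypothesis, the parity $2(n-f)-1$ becomes odd so that the dependence on $\delta_{i}$ drops out. Once this combinatorial cancellation is in hand, each implication is a direct application of a well-known classification theorem.
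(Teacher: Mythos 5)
Your proposal is correct and follows essentially the same route as the paper: $(1)\Rightarrow(2)$ by scalar extension, $(3)\Rightarrow(1)$ by reading off the Jordan blocks modulo squares, and $(2)\Leftrightarrow(3)$ by matching dimension and discriminant and reducing to equality of Hasse invariants, which after the cancellation forced by $\alpha_{1}\beta_{1}\equiv\alpha_{2}\beta_{2}$ depends only on $\left(\frac{\alpha_{i}}{p}\right)$. The only (harmless) differences are that you compute the Hasse invariant entry by entry rather than via the paper's product formula --- the two expressions differ by a factor depending only on $n-f$ and the discriminant, reflecting a choice of convention --- and that you treat the edge case $f=n$ explicitly, which the paper sets aside.
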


\begin{proof}

The implication $(1) \Rightarrow (2)$ is obvious.  Since $(\alpha_{1},p)_{p}=(\alpha_{2}, p)_{p}$ implies that $\alpha_{1} =\alpha_{2} \bmod (\Z_{p}^{*})^2$, from which  $\beta_{1}= \beta_{2} \bmod (\Z_{p}^{*})^2$ follows,  then $(3) \Rightarrow (1)$. To show $(2) \Rightarrow (3)$  we may assume that $f < n$. Let $h_{p}(q)$ be the Hasse-Witt invariant of a form $q$. By the linearity of the Hilbert symbol we have that 
\begin{eqnarray*}
h_{p}(q(\alpha_{i}, \beta_{i}))&=& h_{p}(\langle p,p,..., p,p\beta_{i} \rangle)(\alpha_{i}, p^{n-f}\beta_{i})_{p}\\
&=&(p,p)_{p}^{\frac{(n-f-1)(n-f)}{2}}(\beta_{i},p)_{p}^{n-f-1}(\alpha_{i}, p)^{n-f}_{p}(\alpha_{i},\beta_{i})_{p}\\
&=&(p,p)_{p}^{\frac{(n-f-1)(n-f)}{2}}(\alpha_{i}\beta_{i},p)_{p}^{n-f-1}(\alpha_{i},p)_{p}
\end{eqnarray*}

Since $\alpha_{1} \beta_{1}=\alpha_{2} \beta_{2} \bmod (\Z_{p}^{*})^2$ we have that $h_{p}(q(\alpha_{1}, \beta_{1}))=h_{p}(q(\alpha_{2}, \beta_{2}))$ if and only if $(\alpha_{1},p)_{p}=(\alpha_{2}, p)_{p}$. Thus, $(2) \Leftrightarrow (3)$.

\end{proof}
Throughout the paper we will use Conway notation ($p=-1$) for the prime at infinity  (See \cite[Chapter 15, \S4]{conway}). For $p \neq -1$ we denote by $v_{p}$ the usual $p$-adic valuation in $\Q_{p}.$

\paragraph{Ramification Invariants}

Given a number field $L$  and a prime $p$  we denote by  $g_{p}^{L}$ the number of primes in $O_{L}$ lying over $p$. In particular,  $g_{-1}^{L}= r_{L}+s_{L}$ where $r_{L}$ (resp, $s_{L}$) is the number of real (resp, complex)  embeddings of $L$. Furthermore, 
\[e_{p}^{L}:= \sum_{i=1}^{g_{p}^{L}}e_{i}(L) \ \mbox{and} \ f_{p}^{L}:=\sum_{i=1}^{g_{p}^{L}}f_{i}(L),\] where $e_{1}(L),..., e_{g_{p}^{L}}(L)$ are the ramification degrees of the prime $p$ in $L$,  with respective residue degrees $f_{1}(L),..., f_{g_{p}^{L}}(L)$. \\

When the field $L$ is clear from the context we will denote the ramification (resp, residue) degrees only by $e_{i}$ (resp, $f_{i}$) instead of $e_{i}(L)$ (resp, $f_{i}(L)$).

\begin{definition}

For all primes $p$  we define the integer $u_{p}$ as follows 
\[u_{p}=\begin{cases}
-1 & \mbox{ if $p=-1$,}\\
 \ ~ 5 & \mbox{ if $p=2$,}\\
 \min\limits_{u \in \Z} \left \{ u : \mbox{ $0< u <p$  } | \left( \frac{u}{p} \right)=-1 \right \} & \ \mbox{for every other $p$.}
\end{cases}\]
\end{definition}

\begin{remark}
Notice that $u_{p} \in \Z_{p}^{*} \setminus (\Z_{p}^{*})^2$, in particular for $p\neq 2$ we have that $u_{p}$ is a generator of  $\Z_{p}^{*} / (\Z_{p}^{*})^2$.\\
\end{remark}

\begin{definition}\label{RamInvar} 
Let $L$ be a number field of degree $n$ and let $p$ be a prime. The first and second ramification factors of $p$ in $L$ are the integers defined by:
\begin{align*}
\alpha_{p}^{L} &:= \left(\prod_{i=1}^{g_{p}^{L}}  e_{i}^{f_{i}}\right) u_{p}^{(f_{p}^{L}-g_{p}^{L})} \\ \beta_{p}^{L} & :=   \left( \left(-1\right)^{\sum_{i=1}^{g_{p}^{L}}\left(\left\lfloor{\frac{(e_{i} -1)}{2}}\right\rfloor f_{i}\right) } \right) \left(\prod_{i=1}^{g_{p}^{L}}  e_{i}^{(e_{i}-f_{i})}\right)  u_{p}^{(n-f_{p}^{L}-e_{p}^{L}+g_{p}^{L})}. 
\end{align*} Furthermore, we denote by $\mathfrak{a}_{p}^{L}$ the integral quadratic form of discriminant $\alpha_{p}^{L}$ given by \[\mathfrak{a}_{p}^{L} := \underbrace{\langle e_{1},...,e_{1},e_{1}(-1)^{f_{1}-1}, e_{1}(-u_{p})^{f_{1}-1}\rangle}_{f_{1}} \oplus...\oplus \underbrace{\langle e_{g_{p}^{L}},...,e_{g_{p}^{L}},e_{g_{p}^{L}}(-1)^{f_{g_{p}^{L}}-1}, e_{g_{p}^{L}}(-u_{p})^{f_{g_{p}^{L}}-1}\rangle}_{f_{g_{p}^{L}}}.\]

\end{definition} 

One of the useful applications of Lemma \ref{genusandhasse} is that it allows to check easily whenever two number fields have integral trace forms in the same genus. This can be achieved thanks to the following result:

\begin{theorem}\label{GenusPaperThm}\cite[Theorem 0.1]{Manti3}
Let $L$ be a degree $n$ number field.  Let $p$ be a prime which is not wildly ramified in $L$. If we denote by $q_{L} \otimes \Z_{p}$ the quadratic form over $\Z_{p}$ induced by  $q_{L}$, then 
\[q_{L} \otimes \Z_{p} \cong 
\begin{cases}
\mathfrak{a}_{p}^{L}  \bigoplus p \otimes \underbrace{( \mathbb{H} \oplus...\oplus \mathbb{H})}_{\frac{n-f_{p}^{L}}{2}} & \mbox{ if $p=2$,}\\
\mathfrak{a}_{p}^{L}  \bigoplus p \otimes \underbrace{\langle 1,...,1,\beta_{p}^{L}\nu_{p}^{L} \rangle}_{n-f_{p}^{L}}  &\mbox{ if $p \neq 2$}. \\
\end{cases}\]
Furthermore, if $p \neq 2$ we have that $\mathfrak{a}_{p}^{L} \cong \underbrace{\langle1,....,1,\alpha_{p}^{L} \rangle}_{f_{p}^{L}}.$ Here $\mathbb{H}$ denotes the Hyperbolic plane over $\Z_{2}$ and for an odd prime $\nu_{p}^{L}$ is the unique element in $\Z_{p}^{*}/(\Z_{p}^{*})^{2}$ such that $\displaystyle  p^{n-f_{p}^{L}}\alpha_{p}^{L} \beta_{p}^{L} \nu_{p}^{L}={\rm disc}(L).$

\end{theorem}

\begin{remark}\label{Serre's}
From the above it follows that whenever $L$ is a tamely ramified number field of degree $n$ its discriminant is given by 
\[ {\rm disc}(L) = \prod_{p}p^{(n -f_{p}^{L})}.\]
For details see \cite[Corollary 1.14]{Manti3} or \cite[Chapter III, Proposition 13]{Serre}. For $p=-1$ the above says that the sign of the discriminant is $(-1)^{s_{L}}$. \end{remark} 

The following result, due to B.Erez, J. Morales and R.Perlis, shows that the integral trace over $\Z_{2}$ is completely determined by the discriminant and the degree of the field whenever $2$ is at worst tamely ramified.

\begin{proposition}\label{2genustame}
Let $K,L$ be number fields of the same degree and discriminant. Suppose that $2$ is not wildly ramified in either of them. Then, \[q_{K} \otimes \Z_{2} \cong q_{L} \otimes \Z_{2}.\]
\end{proposition}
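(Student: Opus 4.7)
The plan is to reduce to a local computation prime-by-prime above $2$, use the structure of tame $2$-adic extensions to exhibit a constrained Jordan decomposition of each local trace form, and then identify the two resulting $\Z_2$-lattices via the classification of $\Z_2$-quadratic forms. Writing $O_K \otimes \Z_2 = \prod_{\mathfrak{p}\mid 2} O_{K_\mathfrak{p}}$, the integral trace form decomposes orthogonally as
\[
q_K \otimes \Z_2 \;\cong\; \bigoplus_{\mathfrak{p}\mid 2} \bigl(O_{K_\mathfrak{p}},\, {\rm Tr}_{K_\mathfrak{p}/\Q_2}\bigr),
\]
and similarly for $L$. Thus the problem reduces to analyzing each local summand as a $\Z_2$-lattice.

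For each $\mathfrak{p}\mid 2$, tameness forces $e_\mathfrak{p}$ to be odd, and $K_\mathfrak{p}$ is obtained from the unramified extension of degree $f_\mathfrak{p}$ by adjoining a uniformizer $\pi$ satisfying an Eisenstein polynomial with coefficients in $2\Z_2$ apart from the leading one. Working with the integral basis $\{\omega_j \pi^i : 0 \le j < f_\mathfrak{p},\; 0 \le i < e_\mathfrak{p}\}$, where $\{\omega_j\}$ is an integral basis of the maximal unramified subextension, one computes the Gram matrix of ${\rm Tr}_{K_\mathfrak{p}/\Q_2}$ using the standard facts about traces in tame extensions and finds a block shape: a unimodular block of rank $f_\mathfrak{p}$ coming from the unramified layer, and a block of rank $f_\mathfrak{p}(e_\mathfrak{p}-1)$ whose entries all lie in $2\Z_2$ coming from the ramified layer. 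After a suitable $\Z_2$-change of basis, this local trace pairing admits a Jordan decomposition of type (unimodular of rank $f_\mathfrak{p}$) $\oplus$ ($2$-modular of rank $f_\mathfrak{p}(e_\mathfrak{p}-1)$).

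Summing over $\mathfrak{p}\mid 2$, the global lattice $q_K \otimes \Z_2$ has a unimodular block of rank $f_2^K = \sum_\mathfrak{p} f_\mathfrak{p}$ and a $2$-modular block of rank $n - f_2^K$. Since in the tame case $v_2(\mathrm{disc}(K)) = n - f_2^K$, the hypothesis $\mathrm{disc}(K)=\mathrm{disc}(L)$ together with $[K:\Q]=[L:\Q]$ forces $f_2^K = f_2^L$, so both lattices share the same Jordan ranks. The discriminants of the two Jordan components are then constrained by the common total discriminant, and the classification of $\Z_2$-quadratic lattices by Jordan blocks (together with their unit-class discriminants and oddities) identifies the two forms. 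The main obstacle I foresee is tracking the oddities and unit-class discriminants carefully: over $\Z_2$ the square-class group has eight elements and additional parity constraints beyond those over $\Q_2$ must be verified. Here tameness is essential, as it is precisely what forces the ramified block to inherit the simple structure above with $e_\mathfrak{p}$ odd; without it, $\pi^{e_\mathfrak{p}}$ need not be a clean unit multiple of $2$ and the $2$-modular part of the decomposition would not be pinned down up to $\Z_2$-equivalence.
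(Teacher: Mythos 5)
Your reduction to the primes above $2$, the Jordan shape (unimodular of rank $f_{\mathfrak p}$) $\oplus$ ($2$-modular of rank $f_{\mathfrak p}(e_{\mathfrak p}-1)$) for each tame local factor, and the deduction $f_2^K=f_2^L$ from $v_2(\mathrm{disc})=n-f_2^K$ are all correct and match the skeleton of the paper's argument. The gap is in the last step: you assert that equal Jordan ranks plus the common total discriminant, "together with their unit-class discriminants and oddities," identify the two lattices, but you never compute or compare those oddities and unit-class discriminants --- and they are not determined by rank and total discriminant. Over $\Z_2$ the odd unimodular lattices $\langle 1,1,1\rangle$ and $\langle 1,7,7\rangle$ have the same rank and the same discriminant class, yet different oddities ($3$ versus $7$ mod $8$) and different Hasse--Witt invariants, so they are inequivalent even over $\Q_2$. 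Moreover the splitting type of $2$ is genuinely not pinned down by $n$ and $f_2^K$: in degree $6$ with $f_2^K=2$ one can have $(e,f)=(3,2)$ or $(e_1,e_2)=(3,3)$ with $f_1=f_2=1$, and the resulting unimodular blocks you would compute from your Gram matrices are visibly different diagonal forms. So something special about \emph{trace} forms must be invoked to match the invariants.

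The paper supplies exactly this missing input: it first quotes Conner--Yui \cite{conneryui} to get $q_K\otimes\Q_2\cong q_L\otimes\Q_2$ (i.e.\ equality of the Hasse--Witt invariants, which is a theorem about trace forms, not a formal consequence of equal degree and discriminant), then uses the explicit Jordan splitting of the tame local trace form from \cite{Manti3} together with Witt cancellation to conclude the unimodular constituents $\mathfrak{a}_2^K$ and $\mathfrak{a}_2^L$ are $\Q_2$-equivalent, and finally applies O'Meara's \cite[93:16]{Om} to promote this to $\Z_2$-equivalence of these odd unimodular lattices. To complete your argument along your own lines you would need either to import the Conner--Yui result, or to compute the oddity and discriminant class of each Jordan constituent explicitly in terms of the $e_i,f_i$ and prove they depend only on $n$ and $v_2(\mathrm{disc}(K))$; you would also need to handle the $2$-modular constituent (its type I/II and class), and to be careful that over $\Z_2$ the invariants of individual Jordan constituents are subject to sign-walking and oddity fusion, so "matching block by block" requires the compound invariants, not the naive ones.
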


\begin{proof}
By the ramification hypothesis, and thanks to \cite[8.5]{conneryui}, we have that \[q_{K} \otimes \Q_{2} \cong q_{L} \otimes \Q_{2}.\] Since both fields are at worst tame at $2$, and have the same degree and discriminant, we have, thanks to Remark \ref{Serre's}, that $f_{2}^{K}=f_{2}^{L}$. It follows from Theorem \ref{GenusPaperThm} and Witt's cancellation theorem that the forms $\mathfrak{a}_{2}^{K}$ and $\mathfrak{a}_{2}^{L}$ are equivalent over $\Q_{2}$. Since  $\mathfrak{a}_{2}^{K}$ and $\mathfrak{a}_{2}^{L}$ are odd forms we have from \cite[93:16]{Om} that they are equivalent over $\Z_{2}$ and hence the result follows.

\end{proof}

The following is a generalization of \cite[Theorem 1.2]{Manti1}.

\begin{theorem}
Let $K,L$ be two non-totally real tamely ramified number fields of the same signature and discriminant. Suppose that at most one odd prime ramifies in $K$. Then, the integral quadratic forms $q_{K}$ and $q_{L}$ are isometric.
\end{theorem}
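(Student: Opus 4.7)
The plan is to establish local isometry $q_{K}\otimes \mathbb{Z}_{p}\cong q_{L}\otimes \mathbb{Z}_{p}$ at every prime $p$ (including $p=-1$), and then promote this genus equivalence to an integral isometry by exploiting that $q_{K}$ is indefinite of rank $n\geq 3$, where the theory of spinor genera is particularly well-behaved.

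For the local step, at $p=-1$ the identical signatures give isometry over $\mathbb{R}$; at $p=2$ Proposition \ref{2genustame} applies directly; and at every odd prime $p$ unramified in both fields (the shared discriminant together with tameness forces the ramified loci of $K$ and $L$ to coincide), the forms $q_{K}\otimes \mathbb{Z}_{p}$ and $q_{L}\otimes \mathbb{Z}_{p}$ are unimodular of rank $n$ with equal unit discriminants modulo squares, hence isometric. The heart of the proof lies at the at most one odd ramified prime $p_{0}$. By \cite[Theorem 0.1]{Manti3}, the form $q_{K}\otimes \mathbb{Z}_{p_{0}}$ has a Jordan decomposition fitting the template of Lemma \ref{genusandhasse} with $f=f_{p_{0}}^{K}$, and the hypothesis $\mathrm{disc}(K)=\mathrm{disc}(L)$ yields $\alpha_{p_{0}}^{K}\beta_{p_{0}}^{K}\equiv \alpha_{p_{0}}^{L}\beta_{p_{0}}^{L}\bmod (\mathbb{Z}_{p_{0}}^{*})^{2}$. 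By Lemma \ref{genusandhasse} it then suffices to verify $(\alpha_{p_{0}}^{K},p_{0})_{p_{0}}=(\alpha_{p_{0}}^{L},p_{0})_{p_{0}}$. I would extract this equality from the global product formula $\prod_{v} h_{v}(q_{K})=\prod_{v} h_{v}(q_{L})=1$ for the Hasse-Witt invariants: the local invariants of $q_{K}$ and $q_{L}$ agree at $p=-1$ (same signature), at $p=2$ (by Proposition \ref{2genustame}), and at every odd unramified prime (both equal $+1$ since $(u,v)_{p}=1$ for any $u,v\in \mathbb{Z}_{p}^{*}$ at odd $p$), forcing equality at the sole remaining place $p_{0}$.

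At this stage $q_{K}$ and $q_{L}$ lie in the same genus. Since $K$ is non-totally real, $q_{K}$ is indefinite, and the rank is $n\geq 3$ (the cases $n\leq 2$ are trivial). By Eichler's theorem on indefinite forms of rank at least three each spinor genus contains a single class, so it remains to verify that the genus of $q_{K}$ consists of a single spinor genus. This is where the hypothesis of at most one odd ramified prime is essential: an explicit computation of local spinor norms, using the description $\mathfrak{a}_{p_{0}}^{K}$ of the Jordan decomposition at $p_{0}$ together with the well-known surjectivity of spinor norms for (sums of) modular lattices at the remaining finite places, shows that the genus does not split into distinct spinor genera.

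The main obstacle is not the genus-level argument, which is essentially a clean combination of Lemma \ref{genusandhasse}, Proposition \ref{2genustame}, and the global product formula, but rather the passage from genus to isometry class through the spinor norm calculation at $p_{0}$. That computation requires exploiting the precise Jordan decomposition $\mathfrak{a}_{p_{0}}^{K}$ of the integral trace form at the ramified prime, and the single-odd-ramified-prime hypothesis is precisely what keeps the number of spinor genera in the genus equal to one.
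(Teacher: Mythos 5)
Your argument reproduces the paper's proof almost step for step at the genus level: isometry over $\R$ from the signatures, over $\Z_{2}$ from Proposition \ref{2genustame}, over $\Z_{\ell}$ at odd unramified primes by unimodularity, and at the single odd ramified prime $p_{0}$ by combining the product formula for Hasse--Witt invariants (all other local invariants already agree, so they agree at $p_{0}$) with Lemma \ref{genusandhasse} to upgrade the resulting $\Q_{p_{0}}$-isometry to a $\Z_{p_{0}}$-isometry; Eichler's theorem for indefinite forms of rank at least $3$ then finishes. Where you diverge is the passage from genus to spinor genus: the paper simply cites \cite[Theorem 2.12]{Manti2}, which says that for these trace forms the genus consists of a single spinor genus, whereas you propose to verify this by a direct spinor norm computation at $p_{0}$ using the Jordan splitting $\mathfrak{a}_{p_{0}}^{K}\oplus p_{0}\otimes(\cdots)$. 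That route is viable (once $n\geq 3$ some Jordan component at each finite place has rank at least $2$, so the local spinor norms contain all units), and it makes the argument more self-contained at the cost of a computation the paper outsources.

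However, you have misidentified where the hypothesis ``at most one odd prime ramifies'' enters. It is \emph{not} what keeps the number of spinor genera in the genus equal to one --- that holds for any tame field of degree at least $3$, which is precisely why the paper can invoke the same citation in Proposition \ref{conditionssameg} with arbitrarily many ramified primes. The hypothesis is needed exactly where you in fact already use it, namely in the product-formula step: with only one odd ramified prime the remaining local Hasse invariant is forced, whereas with two or more ramified odd primes the invariants could disagree at those places in a mutually compensating way, the forms would fail to lie in the same genus, and the statement would be false without condition (iii) of Theorem \ref{isometrytrace}. Your final paragraph therefore attributes the key hypothesis to the wrong step, even though the chain of deductions you outline is correct.
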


\begin{proof}
Since the discriminant is a complete invariant for quadratic fields, we may assume that the degree of $K$ and $L$ is greater than $2$. By the hypothesis on the signatures we have that the forms $q_{K}$ and $q_{L}$ are isometric over $\R$. They are also isometric over $\Z_{2}$ thanks to Proposition \ref{2genustame}, and by the classification of unimodular forms over $\Z_{\ell}$ they are also isometric over $\Z_{\ell}$ for every odd prime $\ell$ not dividing the common discriminant. Finally, by the product formula on local Hasse invariants we have that $q_{K}$ and $q_{L}$ are isometric over $\Q_{p}$ for every $p$. From Theorem \ref{GenusPaperThm} and Lemma \ref{genusandhasse} we have that $q_{K}$ and $q_{L}$ are in the same genus, so by \cite[Theorem 2.12]{Manti2} they are in the same spinor genus. Since the fields are non-totally real, the forms $q_{K}$ and $q_{L}$ are two regular indefinite forms of dimension at least $3$ that have the same spinor genus.  By Eichler's \cite{eichler}  we have that they are integrally equivalent.
\end{proof}

\begin{proposition}\label{conditionssameg}

Let $K,L$ be tamely ramified number fields of degree $n \ge 3$. The integral quadratic forms $q_{K}$ and $q_{L}$ belong to the same spinor genus if and only if the following conditions hold:

\begin{itemize}

\item[i)] $\mathrm{disc}(K)=\mathrm{disc}(L)$,

\item[ii)] $s_{K}=s_{L}$,

\item[iii)] For every finite prime $p \neq 2$ that divides the common discriminant of $K$ and $L$ we have that \[ \left( \frac{\alpha_{p}^{K}}{p} \right)=\left( \frac{\alpha_{p}^{L}}{p} \right). \]

\end{itemize}

\end{proposition}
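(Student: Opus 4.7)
The plan is to prove that the three conditions are equivalent to $q_K$ and $q_L$ lying in the same \emph{genus}, and then to upgrade this to spinor genus equivalence by quoting \cite[Theorem 2.12]{Manti2}, exactly as was done at the end of the previous theorem. So the real content is: same genus $\Longleftrightarrow$ (i), (ii), (iii).

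For the forward implication (i)--(iii) $\Rightarrow$ same genus, I would check $\Z_p$-equivalence place by place. At the archimedean place $p=-1$, condition (ii) together with the sign of the discriminant in (i) pin down the signature of $q_K$ and $q_L$, giving real equivalence. At $p=2$, Proposition \ref{2genustame} is directly applicable because tameness at $2$ holds and (i) supplies the equal discriminants. At an odd prime $\ell$ not dividing the common discriminant, both $q_K\otimes\Z_\ell$ and $q_L\otimes\Z_\ell$ are unimodular of the same rank and determinant modulo squares, so the classification of odd $\ell$-adic unimodular forms gives equivalence. At an odd ramified prime $p$, I would invoke \cite[Theorem 0.1]{Manti3} (as in the proof of Proposition \ref{2genustame}) to identify $q_K\otimes\Z_p$ with $\mathfrak{a}_p^K$. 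After a diagonal rearrangement, $\mathfrak{a}_p^K$ has exactly the form
\[
q(\alpha_p^K,\beta_p^K)=\underbrace{\langle 1,\ldots,1,\alpha_p^K\rangle}_{f_p^K}\bigoplus\langle p\rangle\otimes\underbrace{\langle 1,\ldots,1,\beta_p^K\rangle}_{n-f_p^K}
\]
that appears in Lemma \ref{genusandhasse}, because $e_i$ is a unit (tame ramification at $p$) and the residue degree pattern reconstructs $u_p$ to the prescribed exponent. Equality of global discriminants and degrees forces $\alpha_p^K\beta_p^K\equiv\alpha_p^L\beta_p^L\pmod{(\Z_p^*)^2}$, and condition (iii) translates, for $p$ odd, to $(\alpha_p^K,p)_p=(\alpha_p^L,p)_p$. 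Lemma \ref{genusandhasse} then delivers $\Z_p$-equivalence.

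For the reverse implication, same genus provides $\Z_p$-equivalence at every place, hence in particular $\R$-equivalence, giving $s_K=s_L$. Since both fields are tame, the discriminants are recovered from the local data via the formula $\mathrm{disc}(L)=\prod_p p^{n-f_p^L}$, and local $\Z_p$-equivalence forces $f_p^K=f_p^L$ at every $p$, yielding $\mathrm{disc}(K)=\mathrm{disc}(L)$. Finally, at an odd ramified prime $p$, local equivalence combined with Lemma \ref{genusandhasse}\,(1)$\Rightarrow$(3) gives $(\alpha_p^K,p)_p=(\alpha_p^L,p)_p$, which for odd $p$ is the Legendre symbol identity of condition (iii).

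The main obstacle I anticipate is the at-ramified-odd-prime step: one must know that $q_K\otimes\Z_p$ is isometric to $\mathfrak{a}_p^K$ and not merely to something in its genus. This is why the hypothesis $n\geq 3$ matters (Witt cancellation after stripping the unimodular part, as in Proposition \ref{2genustame}) and why the precise bookkeeping in Definition \ref{RamInvar} of the exponents on $u_p$ and the signs is designed to land exactly on a form of the shape handled by Lemma \ref{genusandhasse}. Once that identification is in place, everything else is a clean local-to-global assembly followed by the citation to \cite[Theorem 2.12]{Manti2}.
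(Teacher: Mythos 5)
Your proposal is correct and follows essentially the same route as the paper: the paper's (much terser) proof likewise reduces to genus equivalence by handling $p=-1$ via the signature, $p=2$ via Proposition \ref{2genustame}, and the remaining primes via \cite[Theorem 0.1]{Manti3} together with Lemma \ref{genusandhasse}, and then concludes with \cite[Theorem 2.12]{Manti2}. The only difference is that you spell out the identification of $q_K\otimes\Z_p$ with a form of the shape $q(\alpha_p^K,\beta_p^K)$ and the reverse implication explicitly, which the paper leaves implicit.
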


\begin{proof}
By Proposition \ref{2genustame} we have that $q_{K} \otimes \Z_{2} \cong q_{L} \otimes \Z_{2}$, and since both fields have the same signature we have that their local traces coincide at $p=-1$. Since $K$ and $L$ have the same discriminant, and both fields are tame, we have thanks to Lemma \ref{genusandhasse} and Theorem \ref{GenusPaperThm} that $q_{K} \otimes \Z_{p} \cong q_{L} \otimes \Z_{p}$ for all other values of $p$. The results follows from \cite[Theorem 2.12]{Manti2}.

\end{proof}

\begin{corollary}\label{GalTame}
Let $K,L$ be two tame Galois number fields of the same odd degree. For a given prime $p$ let $e_{p}$ (resp, $\tilde{e}_{p}$) be the ramification degree of $K$ (resp, $L$) at $p$. The forms $q_{K}$ and $q_{L}$ belong to the same spinor genus if and only if

\[\mathrm{disc}(K)=\mathrm{disc}(L) \ {\rm and} \   \left( \frac{e_{p}}{p} \right)=\left( \frac{\tilde{e}_{p}}{p} \right)\] for all odd primes $p$ that ramify in $K$.
\end{corollary}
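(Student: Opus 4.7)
The plan is to derive this corollary directly from Proposition \ref{conditionssameg} by simplifying its three conditions under the extra hypotheses that both fields are Galois of the same odd degree $n$. The main observations are that in a Galois extension the ramification data at each prime becomes ``uniform'' (all $e_i$ equal a common $e_p$ and all $f_i$ equal a common $f_p$, with $e_p f_p g_p = n$), and that parity considerations collapse the first ramification factor modulo squares.

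First I would dispose of condition (ii). Since $[K:\Q]$ is odd, complex conjugation cannot be realized as a nontrivial element of $\gal(K/\Q)$ (which has odd order), so $K$ is totally real, and likewise $L$; hence $s_K = s_L = 0$ automatically, and (ii) is vacuous. Next, condition (i) on discriminants is assumed, and because both fields are tame this forces the sets of ramified primes in $K$ and $L$ to coincide, so the phrase ``for all odd primes $p$ that ramify in $K$'' in the corollary matches ``for every finite prime $p \neq 2$ dividing the common discriminant'' in Proposition \ref{conditionssameg}.

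The heart of the argument is the reduction of condition (iii). Plugging the uniform values into Definition \ref{RamInvar} I would compute
\[
\alpha_p^K \;=\; \Bigl(\prod_{i=1}^{g_p} e_p^{f_p}\Bigr)\, u_p^{\,g_p f_p - g_p} \;=\; e_p^{\,f_p g_p}\, u_p^{\,g_p(f_p-1)}.
\]
Since $n = e_p f_p g_p$ is odd, each of $e_p, f_p, g_p$ is odd; in particular $f_p g_p$ is odd and $f_p - 1$ is even. Thus modulo $(\Z_p^*)^2$ we have $u_p^{g_p(f_p-1)} \equiv 1$ and $e_p^{f_p g_p} \equiv e_p$, giving $\alpha_p^K \equiv e_p \pmod{(\Z_p^*)^2}$ and therefore $\left(\frac{\alpha_p^K}{p}\right) = \left(\frac{e_p}{p}\right)$. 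The analogous identity for $L$ turns Proposition \ref{conditionssameg}(iii) into the Legendre symbol condition in the corollary's statement.

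I do not expect any genuine obstacle: the argument is a direct translation of Proposition \ref{conditionssameg} through the Galois-uniform formulas, and the only care needed is the parity bookkeeping in the exponents of $e_p$ and $u_p$. The one point worth double-checking is the implicit claim that $K$ is totally real, which relies on $n$ being odd and is the reason condition (ii) silently disappears from the final statement.
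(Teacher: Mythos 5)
Your proposal is correct and follows essentially the same route as the paper: reduce to Proposition \ref{conditionssameg}, note that odd Galois extensions are totally real so the signature condition is automatic, and compute $\alpha_p^K \equiv e_p \bmod (\Z_p^*)^2$ from the uniform Galois ramification data and the oddness of the degree. The paper states these reductions without the explicit parity bookkeeping, but your computation is exactly the justification it implicitly relies on.
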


\begin{proof}
Since $K$ is an odd Galois extension it follows from the definition of the first ramification invariant that $\alpha_{p}^{K} \equiv e_{p} \bmod (\Z_{p}^{*})^{2}$, and similarly for $L$. Since odd Galois extensions are totally real the result follows from Proposition \ref{conditionssameg}.
\end{proof}

\begin{theorem}\label{GalTotTame}
Let $K,L$ be two totally ramified tame Galois number fields. Suppose that both $K$ and $L$ have odd degree. Then, the forms $q_{K}$ and $q_{L}$ belong to the same spinor genus if and only if
$\mathrm{disc}(K)=\mathrm{disc}(L).$ 
\end{theorem}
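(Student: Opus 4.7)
The plan is to deduce this directly from Corollary \ref{GalTame}, which says that for tame Galois fields of the same odd degree, the forms $q_K$ and $q_L$ lie in the same spinor genus if and only if $\mathrm{disc}(K)=\mathrm{disc}(L)$ and $\left(\frac{e_p}{p}\right)=\left(\frac{\tilde e_p}{p}\right)$ for every odd prime $p$ ramifying in $K$. The forward implication is then immediate. For the converse, the goal is to show that under the extra hypothesis that each ramified prime is totally ramified in both fields, the Legendre-symbol condition is automatic once discriminants agree.

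First I would note that equality of the discriminants forces $K$ and $L$ to have the same set of ramified primes, since a prime $p$ ramifies in a tame extension exactly when $p \mid \mathrm{disc}$. Second, since $K$ and $L$ have the same common degree $n$ and we are assuming that each ramified prime is totally ramified in each field, for any odd prime $p$ ramifying in $K$ (equivalently in $L$) we have $e_p=\tilde{e}_p=n$. Hence
\[
\left(\frac{e_p}{p}\right)=\left(\frac{n}{p}\right)=\left(\frac{\tilde e_p}{p}\right),
\]
so the hypotheses of Corollary \ref{GalTame} are satisfied and $q_K$ and $q_L$ lie in the same spinor genus.

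There is no real obstacle here; the only subtlety is making sure that the term \emph{totally ramified number field} is being used to mean that every ramified prime is totally ramified (and not merely that a single fixed prime is). Once this is pinned down, the proof is just an application of Corollary \ref{GalTame} combined with the fact that in a Galois extension of degree $n$, a totally ramified prime has ramification index exactly $n$, which kills the dependence of the Legendre symbol on the choice of field.
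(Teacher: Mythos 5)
Your overall strategy is the same as the paper's: reduce to Corollary \ref{GalTame} and check that total ramification makes the Legendre-symbol condition automatic. But there is a genuine gap at the point where you write ``since $K$ and $L$ have the same common degree $n$.'' The theorem does not assume that $K$ and $L$ have the same degree --- it only assumes that each has odd degree --- whereas Corollary \ref{GalTame} requires the two fields to have the same odd degree before it can be invoked at all. So you cannot simply cite the corollary; you must first prove $[K:\Q]=[L:\Q]$, and this is in fact the main content of the paper's proof. The argument is: pick any prime $p$ ramifying in either field (if none exists both fields are $\Q$ and there is nothing to prove). Total ramification gives $e_{p}=[K:\Q]$ and $\tilde{e}_{p}=[L:\Q]$, and tameness gives $v_{p}(\mathrm{disc}(K))=e_{p}-1$ and $v_{p}(\mathrm{disc}(L))=\tilde{e}_{p}-1$; equality of discriminants then forces $e_{p}=\tilde{e}_{p}$ and hence $[K:\Q]=[L:\Q]$. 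Note that this computation simultaneously yields $e_{p}=\tilde{e}_{p}$ directly, so the Legendre symbols agree without any need to pass through $\left(\frac{n}{p}\right)$.

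Once the degree equality is supplied, the rest of your argument is fine, and your reading of ``totally ramified number field'' (every ramified prime is totally ramified) matches the paper's usage. So the fix is short, but as written the proof assumes a hypothesis that is not there and that happens to be exactly the nontrivial part of the statement.
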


\begin{proof}
Suppose that $\mathrm{disc}(K)=\mathrm{disc}(L)$ and let $p$ be a prime ramifying in either field. Since both fields are totally ramified we have that $[K:\Q]=e_{p}$ and $[L:\Q]=\tilde{e}_{p}$. On the other hand since both fields are tame we have that $v_{p}(\mathrm{disc}(K))=e_{p}-1$ and $v_{p}(\mathrm{disc}(L))=\tilde{e}_{p}-1$. In particular,  $[K:\Q]= [L:\Q]$ and $e_{p}=\tilde{e}_{p}$. The result follows from Corollary \ref{GalTame}.
\end{proof}

\begin{remark}
One particular case in which Theorem \ref{GalTotTame} applies is when both fields $K$ and $L$ are tame $\Z/\ell\Z$-extensions of $\Q$ for some prime $\ell$. In such a case there is a stronger result of Conner and Perlis which says that $q_{K}$ and $q_{L}$ are isometric whenever they have the same discriminant. Moreover, in their result the fields can have wild ramification (see \cite[Chapter IV]{conner}).
\end{remark}

\begin{theorem}\label{isometrytrace}
Let $K,L$ be tamely ramified number fields of the same degree, and suppose that $s_{K}>0$. The integral quadratic forms $q_{K}$ and $q_{L}$ are isometric if and only if the following conditions hold:

\begin{itemize}

\item[i)] $\mathrm{disc}(K)=\mathrm{disc}(L)$,

\item[ii)] $s_{K}=s_{L}$,

\item[iii)] For every finite prime $p \neq 2$ that divides the common discriminant of $K$ and $L$ we have that \[ \left( \frac{\alpha_{p}^{K}}{p} \right)=\left( \frac{\alpha_{p}^{L}}{p} \right). \]

\end{itemize}
\end{theorem}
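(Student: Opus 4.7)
The plan is to bootstrap from Proposition \ref{conditionssameg}, which already shows that conditions (i)--(iii) are equivalent to $q_{K}$ and $q_{L}$ lying in the same spinor genus (for $n\geq 3$), and then to use Eichler's theorem, together with the indefiniteness coming from $s_{K}>0$, to upgrade this to an honest integral isometry. The only new analytic content of the theorem is thus the spinor-genus-to-class step; everything else has been prepared.

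For necessity, if $q_{K}\cong q_{L}$ over $\Z$ then they become isometric at every completion. Discriminants then agree automatically. The $\R$-signature of the trace form is $(r_{K}+s_{K},s_{K})$, which forces (ii). For (iii) I would localize at an odd ramified prime $p$: by \cite[Theorem 0.1]{Manti3} the form $q_{K}\otimes \Z_{p}$ sits in the family of forms $q(\alpha,\beta)$ of Lemma \ref{genusandhasse}, with unit invariant $\alpha=\alpha_{p}^{K}$, and the equivalence $(2)\Leftrightarrow (3)$ of that lemma converts the $\Q_{p}$-isometry into equality of Hasse symbols $(\alpha_{p}^{K},p)_{p}=(\alpha_{p}^{L},p)_{p}$; for odd $p$ this is exactly the Legendre symbol condition.

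For sufficiency, assume (i)--(iii). If $n=2$ then $s_{K}>0$ makes $K$ imaginary quadratic, and a quadratic field is determined by its discriminant, so $K=L$ and there is nothing to prove. So assume $n\geq 3$. Then Proposition \ref{conditionssameg} gives directly that $q_{K}$ and $q_{L}$ lie in the same spinor genus. Because $s_{K}>0$, the real signature $(r_{K}+s_{K},s_{K})$ is indefinite, so $q_{K}$ is a regular indefinite $\Z$-form of rank at least three. By Eichler's theorem \cite{eichler}, each spinor genus of such forms contains exactly one integral isometry class, and therefore $q_{K}\cong q_{L}$.

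The main obstacle is philosophical rather than technical: the genus-theoretic calculations have all been done in Lemma \ref{genusandhasse}, Proposition \ref{2genustame} and Proposition \ref{conditionssameg}, and Eichler's theorem does the rest. The one point I would take care to verify explicitly is the indefiniteness of $q_{K}$ under the hypothesis $s_{K}>0$, which follows from the observation that each pair of complex conjugate embeddings contributes a hyperbolic plane to $q_{K}\otimes \R$, so a single complex place already makes the form indefinite as soon as the rank is at least three.
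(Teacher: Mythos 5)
Your proposal is correct and follows essentially the same route as the paper: the paper's proof is precisely the one-line reduction to Proposition \ref{conditionssameg} followed by Eichler's theorem on indefinite forms of rank at least $3$. Your extra care with the degree-$2$ case (where Eichler does not apply but imaginary quadratic fields are determined by their discriminant) and the explicit necessity argument are details the paper leaves implicit, not a different approach.
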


\begin{proof}
Since the spinor genus of  a non-positive definite form of degree at least $3$ contains only one isometry class the result follows from Proposition \ref{conditionssameg}.
\end{proof}

\begin{proposition}\label{usefulcalc}

Let $K,L$ be tamely ramified number fields of the same degree, and suppose that $s_{K}>0$. The integral quadratic forms $q_{K}$ and $q_{L}$ are isometric if and only if the following conditions hold:

\begin{itemize}

\item[(a)]  $f_{p}^{K}=f_{p}^{L}$ for every prime $p$ that ramifies in either $K$ or $L$.

\item[(b)] For every finite prime $p \neq 2$ that ramifies in either $K$ or $L$  \[g_{p}^{K}-h_{p}^{K} \equiv  g_{p}^{L}-h_{p}^{L} \pmod{2},\] where 
\[  h_{p}^{L} =\# \left \{i : \mbox{ $f_{i}(L)$ is odd and $\left( \frac{e_{i}(L)}{p} \right)=-1$} \right \}.\]

\end{itemize}

\end{proposition}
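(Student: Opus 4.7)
The plan is to deduce Proposition \ref{usefulcalc} from Theorem \ref{isometrytrace} by reinterpreting the three conditions (i), (ii), (iii) of that theorem as the two conditions (a), (b) here. I would use the discriminant formula for tame fields to handle (i)--(ii), and a direct computation of the Legendre symbol $\left(\frac{\alpha_p^L}{p}\right)$ to handle (iii). Since the hypothesis $s_K>0$ is exactly what is needed to invoke Theorem \ref{isometrytrace}, no further geometric input is required.

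For the first step, I would combine Theorem \ref{isometrytrace}(i) and (ii) into condition (a). The discriminant formula $\mathrm{disc}(L)=\prod_p p^{n-f_p^L}$ in Conway notation encodes the signature at $p=-1$, so the simultaneous equality of discriminants and signatures amounts to $f_p^K=f_p^L$ for every prime $p$; since unramified primes automatically have $f_p=n$, this is exactly (a). Observe in particular that, under (a), a prime ramifies in $K$ if and only if it ramifies in $L$, so the quantifier set in (b) agrees with the one in Theorem \ref{isometrytrace}(iii).

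The main step is translating condition (iii) into (b). For an odd prime $p$ that ramifies in $L$, Definition \ref{RamInvar} gives
\[ \alpha_p^L = \left(\prod_{i=1}^{g_p^L} e_i^{f_i}\right) u_p^{\,f_p^L-g_p^L}. \]
By tameness each $e_i$ is coprime to $p$, so every Legendre symbol that appears is well-defined. Since $u_p$ is a non-residue mod $p$, the second factor contributes $(-1)^{f_p^L-g_p^L}$. For the product, terms with $f_i$ even contribute a square, and among terms with $f_i$ odd the contribution $\left(\frac{e_i}{p}\right)^{f_i}=\left(\frac{e_i}{p}\right)$ equals $-1$ precisely for the $h_p^L$ indices appearing in the definition of $h_p^L$. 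Hence
\[ \left(\frac{\alpha_p^L}{p}\right) = (-1)^{h_p^L + f_p^L - g_p^L}, \]
and likewise for $K$. Using (a) to cancel $f_p^K=f_p^L$, the equality of these two Legendre symbols is equivalent to $g_p^K-h_p^K\equiv g_p^L-h_p^L\pmod{2}$, which is (b).

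The only place where anything could go wrong is the parity bookkeeping in the calculation above; tameness is exactly what makes the $e_i$ invertible mod $p$ so that the Legendre symbols factor multiplicatively as stated, and the observation that even-$f_i$ terms are automatically squares is what lets the count collapse to $h_p^L$.
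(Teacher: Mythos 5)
Your proposal is correct and follows essentially the same route as the paper: both deduce the statement from Theorem \ref{isometrytrace} via the tame discriminant formula $\mathrm{disc}(F)=\prod_p p^{n-f_p^F}$ (with $p=-1$ handling the signature) and the identity $\left(\frac{\alpha_p^F}{p}\right)=(-1)^{h_p^F+f_p^F-g_p^F}$, which is exactly the paper's first observation. Your write-up merely makes the Jacobi-symbol parity computation more explicit than the paper does.
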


\begin{proof}
The result is implied by Theorem \ref{isometrytrace} and the following three observations about tame number fields.
Let $F$ be a degree $n$ tamely ramified number field. Then,  \begin{itemize}
\item  $(-1)^{f^{F}_{p}}\left(  \frac{\alpha^{F}_{p}}{p} \right)= (-1)^{g_{p}^{F}-h_{p}^{F}}$. This follows from the multiplicative properties of the Jacobi symbol and since $\left( \frac{u_{p}}{p} \right)=-1$. \\

\item $ n -f_{-1}^{F}=s_{F}$. Every place above $p=-1$ has inertia degree $1$ hence $f_{-1}^{F}$ is the number of places above infinity i.e., $r_{F}+s_{F}$.\\

\item $\displaystyle {\rm disc}(F) = \prod_{p}p^{(n -f_{p}^{F})}.$ See Remark \ref{Serre's}.

\end{itemize}

\end{proof}

For fields of fundamental discriminant the above characterization of the isometry between trace forms can be greatly simplified.

\begin{theorem}\label{arithmeticqequiv}
Let $K, L$ be tame non-totally real number fields of the same signature and same fundamental discriminant. Then, the integral quadratic forms $q_{K}$ and $q_{L}$ are isometric if and only if for every odd prime that ramifies in $K$ we have that \[g_{p}^{K} \equiv  g_{p}^{L}\pmod{2}.\] 
\end{theorem}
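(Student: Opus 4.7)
The plan is to reduce everything to Proposition \ref{usefulcalc} and exploit the very rigid shape that tame ramification takes above a prime dividing a fundamental discriminant. First I would note that the equality of signatures gives $[K:\Q]=[L:\Q]=:n$, so the two integral trace forms already have the same dimension, and the hypothesis ``non-totally real'' gives $s_K=s_L>0$, which is what Proposition \ref{usefulcalc} needs.

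Next, to verify hypothesis (a) of Proposition \ref{usefulcalc} I would use the formula $\mathrm{disc}(F)=\prod_p p^{\,n-f_p^F}$ for tame $F$. Since $K$ and $L$ share the same discriminant and the same degree, this forces $f_p^K=f_p^L$ for every prime $p$ (in particular for every $p$ ramifying in either field), and at $p=2$ this is the only thing we need.

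The crucial observation is what fundamental discriminant does at an odd ramified prime $p$: since a fundamental discriminant is squarefree at odd primes, $v_p(\mathrm{disc}(K))=1$, so in the tame formula $v_p(\mathrm{disc}(K))=\sum_{i=1}^{g_p^K}f_i(e_i-1)$ exactly one index $i_0$ contributes, and it contributes with $f_{i_0}=1,\ e_{i_0}=2$, while every other index has $e_i=1$. The unramified indices give $(e_i/p)=1$ and thus contribute nothing to $h_p^K$; the single ramified index has $f_{i_0}$ odd and contributes exactly when $(2/p)=-1$. Hence
\[
h_p^K=\begin{cases} 1 & \text{if } (2/p)=-1,\\ 0 & \text{otherwise,}\end{cases}
\]
and the same formula holds for $L$. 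In particular $h_p^K=h_p^L$, so condition (b) of Proposition \ref{usefulcalc} collapses to $g_p^K\equiv g_p^L\pmod 2$.

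Putting these pieces together, Proposition \ref{usefulcalc} gives $q_K\cong q_L$ iff $g_p^K\equiv g_p^L\pmod 2$ for every odd ramified $p$, which is the statement. The only subtle step is the structural analysis of the tame ramification data from $v_p(\mathrm{disc})=1$; once that is in hand the computation of $h_p$ is mechanical. The one thing to be careful about is making sure the ``ramifies in $K$'' and ``ramifies in $L$'' conditions agree, but this is immediate because $\mathrm{disc}(K)=\mathrm{disc}(L)$ determines the ramified primes in both fields.
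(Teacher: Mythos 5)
Your proposal is correct and follows essentially the same route as the paper's own proof: reduce to Proposition \ref{usefulcalc}, use the tame discriminant formula and squarefreeness at odd primes to force exactly one prime above $p$ with $e=2$, $f=1$, and deduce $h_p^K=h_p^L=\frac{1-\left(\frac{2}{p}\right)}{2}$ so that condition (b) collapses to the parity of $g_p$. No substantive differences.
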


\begin{proof}
Let $p$ be an odd prime ramified in $K$, and let $d:={\rm disc}(K)$. By the hypothesis on $K$ we have that $v_{p}(d)=1$ and since $v_{p}(d)=[K:\Q] -f_{p}^{K}$ it follows that \[f_{1}(K)(e_{1}(K)-1)+...+f_{g_{p}^{K}}(K)(e_{g_{p}^{K}}(K)-1)=1.\] Therefore there exists a unique $1 \leq i \leq g_{p}^{K}$ such that $e_{i} \neq 1$. Moreover for such an $i$ we have that $f_{i}=1$ and $e_{i}=2$, thus \[  h_{p}^{K} =\# \left \{i : \mbox{ $f_{i}(K)$ is odd and $\left( \frac{e_{i}(K)}{p} \right)=-1$} \right \}= \frac{1-\left( \frac{2}{p} \right)}{2}.\] Similarly we have that  $\displaystyle h_{p}^{L}=\frac{1-\left( \frac{2}{p} \right)}{2}$ so in particular $h_{p}^{K}=h_{p}^{L}$. Since $K$ and $L$ have the same signatures we have that $[K:\Q]=[L:\Q]$, and since they have the same discriminant and both are tame we have that $f_{p}^{K}=f_{p}^{L}$ for every ramified prime $p$. Since $h_{p}^{K}=h_{p}^{L}$ for every odd ramified prime the result follows from Proposition \ref{usefulcalc}.
\end{proof}

\begin{remark}
Notice that a number field that has fundamental discriminant can only have wild ramification at $p=2$. In particular, for square free discriminants we can remove the tameness condition on the above theorem.\end{remark}

\section{Cubic fields}

All the results we have proved on the isometry of the integral trace have the assumption that the number fields are tamely ramified. In the case of cubic fields we can improve this by giving sufficient and enough conditions to decide when two cubic fields have integral traces in the same spinor genus. As it turns out, the equality between the discriminants gives such necessary and sufficient  conditions.
\begin{lemma}\label{localcubicos}
Let $F_{1},F_{2}$ be two totally ramified cubic extensions of $\Q_{3}$. Then, the integral trace forms of $F_{1}$ and $F_{2}$ are equivalent if and only if \[ {\rm disc}(F_{1})={\rm disc}(F_{2}) \bmod (\Z_{3}^{*})^2.\]
\end{lemma}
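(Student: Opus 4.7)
The plan is to invoke the classification of quadratic forms over $\Z_3$. Since $3$ is odd, the isometry class of a non-degenerate $\Z_3$-lattice is determined by its Jordan decomposition, i.e., by the rank and the unit-class discriminant of each unimodular component (see \cite{Om}). The \emph{only if} direction is then immediate: isometric forms have equal discriminants in $\Z_3/(\Z_3^*)^2$, and the discriminant of the integral trace form coincides with that of the field.

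For the converse, I would fix a uniformizer $\pi_i \in O_{F_i}$ with Eisenstein minimal polynomial $f_i(x) = x^3 + A_i x^2 + B_i x + C_i$, so that $O_{F_i} = \Z_3[\pi_i]$ and $\{1, \pi_i, \pi_i^2\}$ is a $\Z_3$-basis. Newton's identities give explicit formulas for ${\rm Tr}(\pi_i^k)$ with $k \le 4$ as polynomials in $A_i, B_i, C_i$, and hence an explicit Gram matrix whose $(1,1)$ entry is $3$. A symmetric elimination against this pivot splits off an orthogonal summand $\langle 3 \rangle$ and leaves a rank-$2$ $\Z_3$-lattice $Q_i$; the task then reduces to reading off the Jordan decomposition of $Q_i$.

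The heart of the argument is a case analysis on $v_3({\rm disc}(F_i))$, which for totally ramified cubic extensions of $\Q_3$ can only take the values $3$, $4$, or $5$ (as one sees by evaluating $v_{F_i}(f_i'(\pi_i)) = 3 + v_{F_i}(\pi_i^2 + 2(A_i/3)\pi_i + B_i/3)$ in terms of $v_3(A_i)$ and $v_3(B_i)$). In each of these three cases, a direct elimination exhibits $q_{F_i}$ in a specific Jordan form whose unit-class invariants are visibly a function of the unit part of ${\rm disc}(F_i) \bmod (\Z_3^*)^2$. Specifically, when $v_3({\rm disc})=5$ the form is forced into a single isometry class, isomorphic to $\langle 3, 9, -9 \rangle$ (one checks that the discriminant is always congruent to $-3^5$ modulo squares in this case); and when $v_3({\rm disc}) \in \{3,4\}$ the trace form is parametrized by a single unit-class invariant that is itself determined by the discriminant via the elimination. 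Combining the three cases yields the converse.

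The main technical obstacle I foresee is the subcase $v_3(A_i)=1$: the natural depression $\pi_i \mapsto \pi_i - A_i/3$ is an integral unimodular substitution, but it sends $\pi_i$ to a unit rather than a uniformizer, so the depressed polynomial is no longer Eisenstein. This prevents a clean reduction to a depressed cubic at the polynomial level, so the Gram-matrix elimination must be carried out directly, and one must carefully track which entries (after the first split-off of $\langle 3 \rangle$) are units, which lie in $3\Z_3$, and which lie in $9\Z_3$, in order to identify the Jordan pieces.
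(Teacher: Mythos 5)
Your strategy is in essence the paper's: write the Gram matrix of the trace in a monogenic basis, factor out $\langle 3\rangle$, and run a case analysis on $v_{3}(\mathrm{disc})\in\{3,4,5\}$, identifying the Jordan splitting in each case. The paper's normalization differs slightly: it shifts the Eisenstein generator to a root of $x^{3}+3ax+b$ (which may be a unit rather than a uniformizer), obtaining the Gram matrix $\langle 3\rangle\oplus\bigl(\begin{smallmatrix}-6a&-3b\\-3b&6a^{2}\end{smallmatrix}\bigr)$; and in the two hard cases it does not eliminate symbolically at all, but instead lists the four totally ramified cubic extensions of $\Q_{3}$ with $v_{3}(\mathrm{disc})=4$ and the three with $v_{3}(\mathrm{disc})=5$ and computes their trace forms one by one, finding $\langle 3,6,9\delta\rangle$ and $\langle 3,9,-9\rangle$ respectively.

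The gap in your proposal is that the decisive output of the elimination is asserted rather than computed, and that output is exactly the content of the lemma. For a ternary $\Z_{3}$-lattice of determinant $3^{4}u$ the discriminant does \emph{not} determine the isometry class: with Jordan type $3^{2}\oplus 9^{1}$ there are two classes for each square class of $u$, distinguished by whether the rank-two $3$-modular constituent has square or non-square discriminant; and for determinant $3^{5}u$ even the Jordan type is not forced ($3^{1}\oplus 9^{2}$ versus $3^{2}\oplus 27^{1}$ both occur among lattices of that determinant). So the phrases ``parametrized by a single unit-class invariant that is itself determined by the discriminant'' and ``forced into a single isometry class'' are precisely the claims that must be verified by tracking valuations through the elimination; they cannot be read off from the determinant. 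The claims are true --- for instance, in the paper's normalization $v_{3}(\mathrm{disc})=4$ forces $a,b\in\Z_{3}^{*}$ with $a\equiv 2\pmod 3$, so the unimodular constituent is $\langle 1,-2a\rangle$ with $-2a$ a non-square, giving $\langle 3,6,9\delta\rangle$ --- but until those subcases are actually worked out, you have a plan rather than a proof. One point in your favor: insisting on an honest Eisenstein generator avoids a genuine pitfall that arises with the shifted generator, namely controlling the index of $\Z_{3}[\alpha]$ in $O_{F}$ (e.g.\ $x^{3}-3x+11$ has polynomial discriminant of valuation $5$ but generates an index-$3$ order in a field whose discriminant has valuation $3$), so the relation between coefficient valuations and $v_{3}(\mathrm{disc}\,F)$ is not automatic there.
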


\begin{proof} Let $O_{i}$ be the ring of integers of $F_{i}$, and let $T_{F_{i}}$ be the integral trace form of $F_{i}$. Since $F_{i}/\Q_{3}$ is totally ramified we have that $O_{i}=\Z_{3}[\alpha_{i}]$ where $\alpha_{i}$ satisfies a polynomial of the form $x^{3}+3a_{i}x+b_{i} \in \Z_{3}[x].$ From this we see that ${\rm tr}_{F_{i}/\Q_{3}}(O_{i}) \subset 3\Z_{3}$ thus \[T_{F_{i}} \cong \langle 3 \rangle \otimes U_{i}\] where $U_{i}$ is a ternary quadratic form over $\Z_{3}$. More explicitly, if one considers the $\Z_{3}$-basis of $O_{i}$ given by $\{1,\alpha_{i}, \alpha_{i}^2+2a_{i} \}$ then the Gram matrix of the trace form in that basis is given by \[\left[ \begin{array}{ccc}3 & 0 & 0 \\0 & -6a_i & -3b_i \\0 & -3b_i & 6a_{i}^2\end{array}\right].\] Suppose that $F_{1}$ and $F_{2}$ have the same discriminant, hence the same is true about $U_{1}$ and $U_{2}$. Hence if $d:={\rm disc}(U_{i})$ we see that $d =\frac{{\rm disc}(F_{i})}{27}=-(b_{i}^2+4a_{i}^{3}).$ Since $v_{3}({\rm disc}(F_{i})) \in \{3,4,5\}$, \cite[Chapter III, Proposition 13]{Serre}, we have that $v_{3}(d) \in \{0,1,2\}$. 
\begin{itemize}

\item If $v_{3}(d)=0$ then $U_{1} \cong U_{2} \cong \langle 1,1,d\rangle$ or equivalently \[T_{F_{1}} \cong T_{F_{2}} \cong \langle 3,3,3d\rangle.\]

\item If $v_{3}(d)=1$ then $F_{1}$ and $F_{2}$ are among the four cubic extensions of $\Q_{3}$ with discriminant with 3-valuation equal to $4$. A set of polynomials defining these extensions is: 

\[ x^{3}-3x+19, \quad x^{3}-3x+1, \quad  x^{3}-3x+10, \quad x^{3}-3x+5.\]

An explicit calculation shows that the integral trace form of all these extensions is of the form $\langle 3,6,9\delta\rangle$ for some  $\delta \in \Z_{3}^{*}$. Therefore any two of these extensions have the same integral trace form if and only if they have the same discriminant. In particular, \[T_{F_{1}} \cong T_{F_{2}} \cong \langle 3,6,3d/2\rangle.\]

\item If $v_{3}(d)=2$ then $F_{1}$ and $F_{2}$ are among the three cubic extensions of $\Q_{3}$ with discriminant having 3-valuation equal to $5$. A set of polynomials defining these extensions are: 

\[ x^{3}+3, \quad x^{3}+21, \quad x^{3}+12.\]

An explicit calculation shows that the integral trace of all these extensions is given by $\langle 3,9,-9\rangle$. In particular  \[T_{F_{1}} \cong T_{F_{2}} \cong \langle 3,9,-9\rangle.\]

\end{itemize}
\end{proof}

\begin{theorem}\label{cubicos}
Let $K,L$ be cubic number fields. Then $q_{K}$ and $q_{L}$ belong to the same spinor genus if and only if $\mathrm{disc}(K)=\mathrm{disc}(L)$.
\end{theorem}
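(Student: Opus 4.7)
The plan is to prove the forward direction immediately (the discriminant is an invariant of the spinor genus) and attack the converse through a local-global strategy: assuming $\mathrm{disc}(K)=\mathrm{disc}(L)=d$, I would show that $q_K$ and $q_L$ are isometric over $\Z_{p}$ for every prime $p$ (including $p=-1$), so that they lie in the same genus, and then invoke \cite[Theorem 2.12]{Manti2} to upgrade genus to spinor genus.

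For the place-by-place comparison, I would first use that the sign of $d$ determines the signature of a cubic field, handling $p=-1$ immediately, and then Proposition \ref{2genustame} to deal with $p=2$ (which is automatically tame since $2\nmid 3$). At an odd prime $p\neq 3$, the formula $v_{p}(d)=3-f_{p}^{F}$ for tame cubic $F$ shows that the integer $v_{p}(d)$ already pins down the decomposition type of $p$ in both fields: unramified if $v_{p}(d)=0$, partial with $(e_{i},f_{i})\in\{(1,1),(2,1)\}$ if $v_{p}(d)=1$, and totally tamely ramified with $(e,f)=(3,1)$ if $v_{p}(d)=2$. Consequently the first ramification factors automatically satisfy $\alpha_{p}^{K}=\alpha_{p}^{L}$, and the local isometry follows from Lemma \ref{genusandhasse} together with \cite[Theorem 0.1]{Manti3}, exactly as in the proof of Proposition \ref{conditionssameg}.

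The main obstacle is $p=3$, the only place where wild ramification can occur. Here I would use the classification of possible étale algebras $K\otimes\Q_{3}$: a cubic field extension of $\Q_{3}$, a product $\Q_{3}\times F$ with $F$ quadratic, or $\Q_{3}^{3}$. Inspecting the contribution to $v_{3}(d)$ in each case shows that $v_{3}(d)\in\{0,1,3,4,5\}$, and that the wild values $\{3,4,5\}$ occur exactly when $K\otimes\Q_{3}$ is a totally (necessarily wildly) ramified cubic extension of $\Q_{3}$. Since $\mathrm{disc}(K)=\mathrm{disc}(L)$ forces the same $v_{3}$-valuation and thereby the same local algebra type, the unramified and tame subcases reduce to the previous paragraph, while the wild subcase is precisely the setting of Lemma \ref{localcubicos}, whose hypothesis $\mathrm{disc}(F_{1})=\mathrm{disc}(F_{2})\bmod (\Z_{3}^{*})^{2}$ is automatic from the equality of the global discriminants. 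Assembling the local isometries and feeding them into \cite[Theorem 2.12]{Manti2} then completes the proof.
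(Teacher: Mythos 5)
Your overall strategy---forward direction from the genus invariance of the discriminant, converse by matching $q_K\otimes\Z_p$ and $q_L\otimes\Z_p$ place by place and then applying \cite[Theorem 2.12]{Manti2}---is exactly the paper's, and your treatment of $p=-1$, of odd $p\neq 3$, and of $p=3$ (where you correctly isolate the wild subcase and reduce it to Lemma \ref{localcubicos}) is sound; at $p=3$ you are in fact slightly more careful than the paper in checking that the equality of $3$-adic valuations forces the same local algebra type before invoking the lemma.

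However, there is a genuine gap at $p=2$. Your claim that $2$ is ``automatically tame since $2\nmid 3$'' is false: tameness at $p$ means $p\nmid e_i$ for every ramification index $e_i$, not $p\nmid[K:\Q]$. A cubic field can have $2$ factor as $\mathcal{B}_{1}\mathcal{B}_{2}^{2}$, and then $e=2$ is divisible by $p=2$, so $2$ is wildly ramified (this actually occurs, e.g.\ whenever $v_{2}(\mathrm{disc}(K))=3$, which is impossible under tame ramification since tame total ramification gives $v_2=2$ and the unramified/partially-split tame types give $v_2=0$). In that situation Proposition \ref{2genustame} simply does not apply, so your argument breaks. The paper closes this hole with two extra steps you are missing: first, a claim that if $\mathrm{disc}(K)=\mathrm{disc}(L)$ then the ramification type of $2$ (wild versus tame) is the same in both fields --- proved by computing that a tame totally ramified $2$ would force the local determinant $-12 \bmod (\Z_{2}^{*})^{2}$, which cannot be the discriminant of a ramified quadratic extension of $\Q_2$; and second, in the common wild case, the identification $q_{K}\otimes\Z_{2}\cong\langle 1\rangle\oplus {\rm T}_{F}$ where $F$ is the quadratic extension of $\Q_{2}$ determined by the discriminant, which gives the local isometry directly. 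You need an argument of this kind (or some substitute) before the local-global assembly is complete.
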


\begin{proof} Since the discriminant is an invariant of a genus, we have that  $\mathrm{disc}(K)=\mathrm{disc}(L)$ if $q_{K}$ and $q_{L}$ belong to the same spinor genus. Suppose now that $K$ and $L$ have the same discriminant. Thanks to \cite[Theorem 2.12]{Manti2} it is enough to show that for every prime $p$, the integral forms $q_{K} \otimes \Z_{p}$ and $q_{L} \otimes \Z_{p}$ are equivalent.

\begin{itemize}

\item[(a)] If either $p=-1$ or $p$ does not divide the common discriminant $d$  we have that \[q_{K} \otimes \Z_{p} \cong \langle 1,1, d \rangle \cong q_{L} \otimes \Z_{p}.\]

\item[(b)] Let $p \nmid 6$ be a finite prime dividing the common discriminant. Since $p$ is tamely ramified in both fields we see by looking at the valuation of the discriminant at $p$ that $f_{p}^{K}=f_{p}^{L}$. This equality implies that $p$ has the same number of prime factors and  same ramification and residue degrees in both $K$ and $L$. Thus $h_{p}^{K}=h_{p}^{L}$, and $g_{p}^{K}=g_{p}^{L}$. Hence the result follows by Proposition \ref{usefulcalc}. \\

\item[(c)] Let $p=2$. If $2$ has at worst tame ramification in both fields we have, thanks to Proposition \ref{2genustame}, that $q_{K} \otimes \Z_{2} \cong q_{L} \otimes \Z_{2}$.  If $2$ has wild ramification in both $K$ and $L$ then \[q_{K} \otimes \Z_{2} \cong  \langle 1 \rangle \oplus {\rm T}_{F}  \cong q_{L} \otimes \Z_{2},\] where $F$ is the unique quadratic extension of $\Q_{2}$ with discriminant  equal to the common discriminant of $K$ and $L$. We finish by showing that the above are the only two possibilities i.e.,

{\bf Claim:} {\it The ramification type of $2$ is the same in $K$ and $L$}. 

{\it Proof.} If $2$ had wild ramification in $K$ and tame in $L$ then its prime factorization would be of the form $\mathcal{B}_{1}\mathcal{B}_{2}^2$ in $O_{K}$ and of the form $\mathcal{B}^{3}$ in $O_{L}$. Therefore  \[q_{K} \otimes \Z_{2} \cong  \langle 1 \rangle \oplus {\rm T}_{F},\] where ${\rm T}_{F}$ is the integral trace form of a quadratic ramified extension $F/\Q_{2}$. On the other hand, thanks to Theorem \ref{GenusPaperThm}, we have that \[ q_{L} \otimes \Z_{2} \cong  \langle 3\rangle \bigoplus 2 \otimes \mathbb{H}.\]  Since det$\displaystyle\left(\langle 3\rangle \bigoplus 2 \otimes \mathbb{H}\right)=-12 \bmod (\Z_{2}^{*})^2$ we conclude that disc$(F)=-12 \bmod (\Z_{2}^{*})^2$ but there is no quadratic extension $F/\Q_{2}$ with such a discriminant.

\item[(d)] Let $p=3$. We may assume that $p$ has wild ramification, otherwise we could argue as in (a) or (b). Since $3$ has factorization of type $\mathcal{B}^{3}$ in both fields, then $q_{K} \otimes \Z_{3} \cong  {\rm T}_{F_1}$ and $q_{K} \otimes \Z_{3} \cong  {\rm T}_{F_{2}}$ where the $F_{i}$'s are totally ramified cubic extensions of $\Q_{3}$, and ${\rm T}_{F_{i}}$ is the integral trace form of $F_{i}$. By Lemma \ref{localcubicos} we have that ${\rm T}_{F_1} \cong {\rm T}_{F_2}$, thus \[q_{K} \otimes \Z_{3} \cong q_{L} \otimes \Z_{3}.\] \end{itemize} \end{proof}

As a corollary we obtain that in the case of non-totally real cubic fields, the trace form is completely determined by its discriminant.

\begin{theorem}\label{isometrycubic}
Let $K,L$ be cubic number fields, and suppose $\mathrm{disc}(K)<0$. Then $q_{K}$ and $q_{L}$ are isometric if and only if $\mathrm{disc}(K)=\mathrm{disc}(L)$.
\end{theorem}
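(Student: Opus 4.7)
The forward direction is trivial since the discriminant is an isometry invariant of the integral trace form. For the converse, assume $\mathrm{disc}(K)=\mathrm{disc}(L)<0$. The plan is to combine the already established Theorem \ref{cubicos} with Eichler's theorem on indefinite forms.

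First I would invoke Theorem \ref{cubicos} directly to conclude that $q_{K}$ and $q_{L}$ lie in the same spinor genus. This is the entire content of the local analysis, and it does not require any hypothesis on the sign of the discriminant.

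Next I would use the sign hypothesis to check indefiniteness. Since $\mathrm{disc}(K)<0$, the cubic field $K$ has exactly one pair of complex embeddings, so signature $(r_K,s_K)=(1,1)$, and likewise for $L$. Hence the trace form $q_K$ (and $q_L$) has signature $(r_K+s_K,s_K)=(2,1)$ over $\R$, in particular it is indefinite of rank $3$. Now Eichler's theorem (already used in the proof of the earlier result preceding Proposition \ref{conditionssameg}) says that every spinor genus of indefinite regular integral forms of rank at least $3$ consists of a single isometry class. Applying this to the common spinor genus of $q_K$ and $q_L$ yields $q_K\cong q_L$.

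There is no real obstacle here: the whole work has been done in Theorem \ref{cubicos}, and the role of the hypothesis $\mathrm{disc}(K)<0$ is purely to force the indefiniteness needed for Eichler's theorem. The only mild thing to double-check is regularity of $q_K$ (nondegeneracy), which follows because the trace pairing on $O_K$ is nondegenerate with discriminant $\mathrm{disc}(K)\neq 0$.
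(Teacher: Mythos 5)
Your proof is correct and follows exactly the paper's own route: Theorem \ref{cubicos} gives the common spinor genus, and the hypothesis $\mathrm{disc}(K)<0$ forces signature $(1,1)$, hence an indefinite rank-$3$ trace form, so Eichler's theorem yields a single isometry class in the spinor genus. The paper's proof is just a one-line version of the same argument.
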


\begin{proof}
Since for cubic fields, having a negative discriminant is equivalent to having an indefinite trace form, the result is an immediate consequence of Theorem \ref{cubicos} and Eichler's result on indefinite forms \cite{eichler}.
\end{proof}

\section*{Acknowledgements}

In the first place I would like to thank the referee for the careful reading of the paper, and all of her/his helpfull suggestions. I also thank  Lisa (Powers) Larsson for her valuable comments on a previous version of this paper.

\noindent
Guillermo Mantilla-Soler\\
Departamento de Matem\'aticas,\\
Universidad de los Andes, \\
Carrera 1 N. 18A - 10, Bogot\'a, \\
Colombia.\\
g.mantilla691@uniandes.edu.co

\end{document}